\newtheorem{theorem}{Theorem}[section]
\newtheorem{lemma}[theorem]{Lemma}
\newtheorem{corollary}[theorem]{Corollary}
\newtheorem{hypothesis}[theorem]{Hypothesis}
\theoremstyle{definition}
\theoremstyle{remark}
\numberwithin{equation}{section}
\def\bfb{{\mathbf b}}
 \def\bfe{{\mathbf e}}
\def\bff{{\mathbf f}}
\def\bfx{{\mathbf x}}
\def\bfy{{\mathbf y}}
\def\calB{{\mathcal B}}
\def\calD{{\mathcal D}}
\def\calJ{{\mathcal J}}
\def\calM{{\mathcal M}}
\def\C{{\mathbb C}}\def\F{{\mathbb F}}
\def\R{{\mathbb R}}
\def\Z{{\mathbb Z}}\def\Q{{\mathbb Q}}
\def\grS{{\mathfrak S}}
\def\alp{{\alpha}} \def\bfalp{{\boldsymbol \alpha}}
\def\gam{{\gamma}} 
\def\Gam{{\Gamma}}
\def\del{{\delta}} \def\Del{{\Delta}}
\def\deltil{{\widetilde \delta}}
\def\tet{{\theta}}
\def\deltil{{\tilde \del}}
\def\psitil{{\widetilde \psi}}
 \def\Ome{{\Omega}}
\def\eps{\varepsilon}
\def\atil{{\tilde a}}
\def\gtil{{\tilde g}}
\def\rank{{\rm rank}}
\def\adj{{\rm adj}}
\DeclareMathOperator{\Sing}{Sing}
\DeclareMathOperator{\id}{id}
\newenvironment{blue}{\color{blue}}{}
\begin{document}
\title[A variant of Weyl's inequality for systems of forms]{A variant of Weyl's inequality for systems of forms and applications}
\author[Damaris Schindler]{Damaris Schindler}
\address{Hausdorff Center for Mathematics, Endenicher Allee 62-64, 53115 Bonn, Germany}
\email{damaris.schindler@hcm.uni-bonn.de}

\subjclass[2010]{11P55 (11D72, 11G35)}
\keywords{forms in many variables, Hardy-Littlewood method, Weyl's inequality}

\begin{abstract}
We give a variant of Weyl's inequality for systems of forms together with applications. First we use this to give a different formulation of a theorem of B. J. Birch on forms in many variables. More precisely, we show that the dimension of the locus $V^*$ introduced in this work can be replaced by the maximal dimension of the singular loci of forms in the linear system of the given forms. In some cases this improves on the aforementioned theorem of Birch.\par
We say that a system of forms is a Hardy-Littlewood system if the number of integer points on the corresponding variety restricted to a box satisfies the asymptotic behaviour predicted by the classical circle method. 
As a second application, we improve on a theorem of W. M. Schmidt which states that a system of homogeneous forms of same degree is a Hardy-Littlewood system as soon as the so called $h$-invariant of the system is sufficiently large. 
In this direction we generalise previous improvements of R. Dietmann on systems of quadratic and cubic forms to systems of forms of general degree.
\end{abstract}

\maketitle

\excludecomment{com}

\section{Introduction}

We consider a system of homogeneous forms $f_i(x_1,\ldots, x_n)\in \Z[x_1,\ldots, x_n]$ of degree $d$. For convenience we write $\bfx = (x_1,\ldots, x_n)$ and ask for the number of integer solutions to the system of Diophantine equations given by 
\begin{equation*}
f_i(\bfx)=0,\quad 1\leq i\leq r.
\end{equation*}
More precisely, we fix some box $\calB\subset \R^n$ which is contained in the unit box and we let $P\geq 1$ be some real parameter. Then we define the counting function
\begin{equation*}
N(P)=\sharp\{ \bfx\in \Z^n: \bfx\in P\calB,\ f_i(\bfx)=0,\ 1\leq i\leq r\}.
\end{equation*}
This counting function has received a lot of attention so far and is a central object of investigation in number theory. If the number of variables $n$ is relatively large compared to the number of equations and the degree $d$, then the Hardy-Littlewood circle method has proved to be a valuable tool in obtaining asymptotic formulas for the counting function $N(P)$.\par
A very general result in this direction has been obtained by Birch in \cite{Bir62}. He introduces a locus called $V^*$ which is the affine variety given by
\begin{equation*}
\rank \left(\frac{\partial f_i(\bfx)}{\partial x_j}\right)_{\substack{1\leq i\leq r\\ 1\leq j\leq n}} <r.
\end{equation*}
In his work \cite{Bir62} Birch provides an asymptotic formula for $N(P)$ as soon as
\begin{equation*}
n-\dim V^*> r(r+1)(d-1)2^{d-1}.
\end{equation*}
A main ingredient in most applications of the circle method, as for example the one in \cite{Bir62}, is a form of Weyl's inequality. In this paper we present a variant of Weyl's inequality  for systems of forms, and give two applications of our new form of Weyl's inequality.\par
First this allows us to replace the dimension of the locus $V^*$ in Birch's theorem on system of forms by a quantity which appears to be more natural in this context. For some integer vector $\bfb\in \Z^r$ we let $f_\bfb= b_1f_1+\ldots +b_rf_r$ be the form in the pencil of $f_1,\ldots, f_r$ associated to $\bfb$. For any homogeneous form $g$ we write $\Sing (g)$ for the singular locus (in affine space) of the form $g=0$. We can now state a variant of Birch's theorem on forms in many variables as follows.

\begin{theorem}\label{thm1}
Assume that
\begin{equation*}
n-\max_{\bfb\in \Z^r\setminus\{0\}}( \dim \Sing (f_\bfb) )> r(r+1) (d-1)2^{d-1}.
\end{equation*}
Then we have the asymptotic formula
\begin{equation}\label{eqnHL}
N(P)= \grS \calJ P^{n-rd} +O(P^{n-rd-\del}),
\end{equation}
for some $\del >0$. Here $\grS$ and $\calJ$ are the singular series and singular integral.\end{theorem}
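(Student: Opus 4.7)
My approach is to run Birch's circle method argument from \cite{Bir62} essentially line by line, but substituting the paper's new variant of Weyl's inequality in place of the original Weyl step. The starting point is the exponential sum
\begin{equation*}
S(\bfalp) = \sum_{\bfx \in P\calB \cap \Z^n} e\bigl( \alp_1 f_1(\bfx) + \cdots + \alp_r f_r(\bfx) \bigr),
\end{equation*}
together with the orthogonality identity $N(P) = \int_{[0,1]^r} S(\bfalp)\,\d\bfalp$. I would set up the Hardy-Littlewood dissection exactly as in Birch's original paper: major arcs $\grM(\tet)$ centred at rationals $\bfa/q$ with $q \le P^\tet$ and of radius $P^{-d+\tet}$, and a minor arcs complement $\grm(\tet)$, for a suitably small parameter $\tet>0$ to be chosen at the end.

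The new ingredient enters on the minor arcs. Birch's Weyl step proceeds by repeated differencing in order to reduce the problem of bounding $|S(\bfalp)|^{2^{d-1}}$ to counting integer points on an auxiliary variety cut out by the full system of symmetric multilinear forms $\Psi_i$ associated to the $f_i$; this count is then controlled by $\dim V^*$. The variant of Weyl's inequality proved earlier in the paper should instead produce, whenever $|S(\bfalp)|$ is abnormally large, a nonzero $\bfb\in\Z^r$ for which the relevant multilinear count is governed by the singular locus of the single form $f_\bfb$ in the pencil. Substituting this into the minor arcs integral, the standing hypothesis
\begin{equation*}
n - \max_{\bfb\in \Z^r\setminus\{0\}} \dim \Sing(f_\bfb) > r(r+1)(d-1) 2^{d-1}
\end{equation*}
yields a nontrivial power saving $\int_{\grm(\tet)} |S(\bfalp)|\,\d\bfalp \ll P^{n-rd-\del_0}$ for some $\del_0>0$, which is the precise analogue of Birch's minor arcs estimate with $\dim V^*$ replaced by $\max_\bfb \dim \Sing(f_\bfb)$.

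On the major arcs no new input is required. Once the Weyl bound is in place, the standard decomposition of $S(\bfalp)$ near a rational point $\bfa/q$ as a product of a complete sum and a smooth integral, together with the convergence estimates for the singular series $\grS$ and singular integral $\calJ$ (whose convergence is itself driven by the same Weyl bound), produces the main term $\grS\calJ P^{n-rd}$ and an acceptable error on $\grM(\tet)$. Combining the major and minor arcs estimates and choosing $\tet$ sufficiently small yields the asymptotic formula with some $\del>0$.

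The principal obstacle, and indeed the substantive content of the theorem, is the variant of Weyl's inequality itself: one must show that largeness of the multi-variable Weyl sum $S(\bfalp)$ can always be traced back to a geometric pathology of a \emph{single} form $f_\bfb$ in the linear pencil, rather than of the entire Jacobian. Granted this variant, Theorem \ref{thm1} reduces to the bookkeeping task of re-running Birch's argument with the improved exponent inserted at the point where the Weyl estimate is applied, and taking care that the subsequent convergence and regularity statements for $\grS$ and $\calJ$ go through unchanged under the weaker hypothesis.
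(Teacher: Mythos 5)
Your overall strategy — re-run Birch's circle-method argument with the paper's new Weyl bound inserted at the minor-arcs step — matches the paper's route, and the high-level description of the major/minor arcs bookkeeping is accurate. However, there is a genuine gap: the variant of Weyl's inequality (Lemma \ref{birlem2}) is \emph{not} stated in terms of $\dim \Sing(f_\bfb)$; it is stated in terms of the $g$-invariant $\gtil = \inf_{\bfb\neq 0} g(f_\bfb)$, where $g(f_\bfb)$ measures the exponent governing the point count on the multilinear variety $\calM_{f_\bfb}$. To deduce Theorem \ref{thm1} you must first translate the hypothesis on singular loci into a lower bound on $\gtil$, and this translation is the substantive geometric content of the proof, not a trivial bookkeeping step.

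The paper closes this gap as follows: the diagonal $\calD = \{\bfx^{(2)}=\cdots=\bfx^{(d)}\}$ meets $\calM_{f_\bfb}$ in a set isomorphic to $\Sing(f_\bfb)$, so the affine dimension theorem gives
\begin{equation*}
\dim \Sing(f_\bfb) = \dim(\calD \cap \calM_{f_\bfb}) \geq \dim \calD + \dim \calM_{f_\bfb} - (d-1)n = n + \dim \calM_{f_\bfb} - (d-1)n,
\end{equation*}
whence $g(f_\bfb) \geq n - \dim \Sing(f_\bfb)$, and therefore $\gtil \geq n - \max_{\bfb\neq 0}\dim \Sing(f_\bfb)$. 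Only after this inequality is in hand can one substitute into Lemma \ref{birlem2} and recover Birch's exponent with $\dim V^*$ replaced by $\max_{\bfb}\dim\Sing(f_\bfb)$ (equivalently, apply Corollary \ref{cor1} together with Schmidt's Proposition I to avoid redoing Birch's major-arcs analysis). Your proposal asserts that the ``multilinear count is governed by the singular locus of $f_\bfb$'' without showing why; you need to supply this dimension argument, or the deduction from Lemma \ref{birlem2} to the hypothesis of Theorem \ref{thm1} does not go through.
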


This is essentially the main theorem of Birch's work \cite{Bir62} where the quantity $\dim V^*$ is replace by $\max_{\bfb\in \Z^r\setminus\{0\}} (\dim \Sing (f_\bfb))$. In other words we can now describe the singularity of the system of forms $f_i$, $1\leq i\leq r$ by the maximal dimension of the singular loci of forms in the pencil. To our knowledge, there is in contrast no satisfactory geometric interpretation for the locus $V^*$ available.\par
Furthermore, we point out that for any non-trivial form $f_\bfb$ in the pencil, the dimension of the singular locus $\dim \Sing (f_\bfb)$ is always bounded by $\dim V^*$. Indeed, the singular locus of the form $f_\bfb$ is given by
\begin{equation*}
b_1\frac{\partial f_1}{\partial x_i}(\bfx)+\ldots + b_r \frac{\partial f_r}{\partial x_i}(\bfx)=0,\quad 1\leq i\leq n.
\end{equation*}
If some vector $\bfx$ is contained in $\Sing (f_\bfb)$, then these relations imply that the rank of the matrix $(\frac{\partial f_i}{\partial x_j})$ can be at most $r$. This shows that $\Sing (f_\bfb)\subset V^*$, and $\dim \Sing(f_\bfb)\leq \dim V^*$ for any non-zero vector $\bfb$.\par
Hence Theorem \ref{thm1} formally implies Birch's theorem in \cite{Bir62}. Furthermore, there are examples of systems where Theorem \ref{thm1} is stronger than Birch's main theorem in \cite{Bir62}. For simplicity of notation let
\begin{equation*}
u=u(\bff):= \max_{\bfb\in\Z^r\setminus\{0\}}(\dim \Sing (f_\bfb)).
\end{equation*}
Let $k\geq r-1$ be some integer and consider the system of quadratic forms
\begin{equation*}
Q_i(\bfx,\bfy)= \sum_{j=1}^k x_j y_{ji},\quad 1\leq i\leq r,
\end{equation*}
in the $k(r+1)$ variables $x_j$ for $1\leq j \leq k$ and $y_{ji}$ for $1\leq i\leq r$ and $1\leq j\leq k$. A short computation reveals that 
\begin{equation*}
\dim V^*=k(r-1)+r-1= u+r-1.
\end{equation*}
Note also that once we choose $k$ sufficiently large, Theorem \ref{thm1} is indeed applicable. On the other hand these examples are essentially sharp. If we work over the complex numbers then we have
\begin{equation*}
V^*= \cup_{\bfb\in\C^r\setminus\{0\}} \Sing (f_\bfb),
\end{equation*}
and this leads to the bound $\dim V^*\leq u_\C +r-1$, where
\begin{equation*}
u_\C:=  \max_{\bfb\in\C^r\setminus\{0\}}(\dim \Sing (f_\bfb)).
\end{equation*}
Since published in 1962, Birch's work \cite{Bir62} has received a lot of attention and has been generalised in multiple directions. It seems natural to expect that our observation and new formulation of the main result in Theorem \ref{thm1} can in an analogous way be transferred to most of these generalisations and developments. Some examples to mention are work of Brandes \cite{BraA13} on forms representing forms and the vanishing of forms on linear subspaces.
Furthermore, the analogue of the locus $V^*$ in work of Skinner \cite{Ski97}, which generalises Birch's theorem on forms in many variables to the number field situation, and work of the author \cite{bihomforms} on bihomogeneous forms, could very likely be replaced by a non-singularity condition on forms of the linear system. Another result and application in this direction is a paper of Lee \cite{LeeAlan} on a generalisation to function fields $\F_q[t]$. 

As a second application of our new form of Weyl's inequality for systems of forms, we can strengthen a theorem of Schmidt \cite{Schmidt85}, which provides an asymptotic formula for the counting function $N(P)$ as soon as a so-called $h$-invariant of the system is sufficiently large. As a special case of this we recover the results of Dietmann's work \cite{DieA12} on systems of quadratic and cubic forms.\par
For a homogeneous form $f(\bfx)\in \Q[\bfx]$ we define the $h$-invariant of $f$ to be the least integer $h$ such that $f$ can be written in the form
\begin{equation*}
f(\bfx)=\sum_{i=1}^h g_i(\bfx) g_i'(\bfx),
\end{equation*}
with forms $g_i(\bfx)$ and $g_i'(\bfx)$ of positive degree with rational coefficients. For a system $\bff$ of homogeneous forms $f_i(\bfx),$ $1\leq i\leq r$, of degree $d$, we define the $h$-invariant $h(\bff)$ to be the minimum of the $h$-invariant of any form in the rational linear system of the forms, i.e. we set $h(\bff)=\min_{\bfb\in\Z^r\setminus\{0\}}h(f_\bfb)$.\par
We say that a system of forms $\bff$ of degree $d$ is a Hardy-Littlewood system if the conclusion on the asymptotic formula for the counting function $N(P)$ as in equation (\ref{eqnHL}) in Theorem \ref{thm1} holds. If the $h$-invariant of a system of homogeneous forms of the same degree is sufficiently large, then Schmidt proves in his work \cite{Schmidt85} that $\bff$ is a Hardy-Littlewood system. As we shall indicate in section 3, his results easily imply the following theorem, which we state here for convenience.

\begin{theorem}\label{thm2a}[Schmidt, 1985, see \cite{Schmidt85}]
There exists a function $\phi(d)$ with the following property. If the system $\bff$ of homogeneous forms of degree $d>1$ has a $h$-invariant which is bounded below by
\begin{equation*}
h(\bff)> \phi(d) (r(r+1)(d-1)2^{d-1}+(d-1)r(r-1)),
\end{equation*}
then the system $\bff$ is a Hardy-Littlewood system.
Furthermore, one has $\phi(2)=\phi(3)=1$, $\phi(4)=3$, $\phi(5)=13$ and $\phi(d)<(\log 2)^{-d} d!$ in general. 
\end{theorem}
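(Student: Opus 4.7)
The plan is to derive Theorem \ref{thm2a} from Theorem \ref{thm1} by combining it with a standard relation, due to Schmidt, between the $h$-invariant of a form and the codimension of its singular locus.

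The key input I would invoke is the following bound, essentially proved in \cite{Schmidt85}: for any non-zero homogeneous form $g \in \Q[x_1,\ldots,x_n]$ of degree $d$, one has
\begin{equation*}
h(g) \leq \phi(d) \bigl(n - \dim \Sing(g)\bigr),
\end{equation*}
where $\phi(d)$ is the function appearing in the statement. The specific small-degree values $\phi(2)=\phi(3)=1$, $\phi(4)=3$, $\phi(5)=13$ come respectively from the fact that the $h$-invariant of a quadratic form is controlled by its rank, and from the explicit decompositions for cubic, quartic and quintic forms in the Davenport--Birch--Schmidt tradition; the general estimate $\phi(d)<(\log 2)^{-d}d!$ follows from Schmidt's inductive construction in \cite{Schmidt85}.

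Granting this, I would choose $\bfb_0 \in \Z^r \setminus \{0\}$ achieving $\dim \Sing(f_{\bfb_0}) = u(\bff)$. Applying the lemma to $g=f_{\bfb_0}$ gives
\begin{equation*}
h(\bff) \leq h(f_{\bfb_0}) \leq \phi(d) \bigl(n - u(\bff)\bigr),
\end{equation*}
where the first inequality is the definition of $h(\bff)$ as a minimum over the pencil. Combining with the hypothesis $h(\bff) > \phi(d)(r(r+1)(d-1)2^{d-1} + (d-1)r(r-1))$ yields
\begin{equation*}
n - u(\bff) > r(r+1)(d-1)2^{d-1} + (d-1)r(r-1) > r(r+1)(d-1)2^{d-1}.
\end{equation*}
Therefore Theorem \ref{thm1} applies to $\bff$ and produces the asymptotic formula \eqref{eqnHL}, so $\bff$ is a Hardy-Littlewood system, as claimed.

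The main obstacle is locating in \cite{Schmidt85} the precise quantitative bound relating $h(g)$ to $\phi(d)(n-\dim\Sing(g))$ with the claimed values of $\phi(d)$ for small $d$. The extra summand $(d-1)r(r-1)$ in the hypothesis appears to provide a safety margin that absorbs a mild loss arising when translating Schmidt's original system-theoretic formulation (phrased via his own singularity invariant on the full pencil) into the form above, which speaks about individual forms $f_{\bfb}$. Modulo this bookkeeping, the derivation collapses to a one-line application of Theorem \ref{thm1}, which is why the conclusion is essentially already contained in \cite{Schmidt85}.
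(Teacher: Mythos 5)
Your proposal takes a genuinely different route from the paper, but it rests on an unsubstantiated (and, I believe, false) inequality.

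The crux of your argument is the claimed bound $h(g) \leq \phi(d)\,(n - \dim\Sing(g))$ for a single form $g$. This is not what Schmidt proves, and it does not follow from the relations the paper actually uses. What Schmidt establishes (cited in the paper as (17.2) of \cite{Schmidt85}) is $h(f) \leq \phi(d)\,g(f)$, where $g(f)$ is the $g$-invariant measuring the density of integer points on $\calM_f$. Separately, the paper's proof of Theorem \ref{thm1} shows via the affine intersection theorem that $g(f) \geq n - \dim\Sing(f)$. But this inequality goes the \emph{wrong} way for your purposes: from $h(f) \leq \phi(d)\,g(f)$ and $g(f) \geq n - \dim\Sing(f)$ one cannot conclude $h(f) \leq \phi(d)\,(n - \dim\Sing(f))$. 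In general $g(f)$ can exceed $n - \dim\Sing(f)$ (for $d\geq 3$ there is no reason for equality; it is only for quadratics that $\calM_f$ is linear and the two quantities coincide), so your chain breaks. A symptom of this is that your derivation would actually produce the \emph{stronger} Theorem \ref{thm2} essentially for free, with the $(d-1)r(r-1)$ term appearing as waste; but the whole point of the paper is that passing from Theorem \ref{thm2a} to Theorem \ref{thm2}, i.e.\ removing that term, requires the new Weyl-type inequality (Lemma \ref{birlem2}).

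Your interpretation of the $(d-1)r(r-1)$ term as a ``safety margin'' is also off. In the paper's derivation of Theorem \ref{thm2a}, this term is intrinsic: Schmidt works with a $g$-invariant $g(\bff)$ of the whole system, proves that $g(\bff) > 2^{d-1}(d-1)r(r+1)$ implies the Hardy--Littlewood asymptotic (remark after Proposition $II_0$), and proves $h(\bff) \leq \phi(d)\,\bigl(g(\bff) + (d-1)r(r-1)\bigr)$ (Corollary after Proposition III). Combining these two gives Theorem \ref{thm2a} exactly. Note that this derivation uses only Schmidt's 1985 results and never invokes Theorem \ref{thm1} or the new Lemma \ref{birlem2}, which is consistent with the theorem being attributed to Schmidt. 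Your route, by contrast, invokes Theorem \ref{thm1} — itself a new result of the paper — which makes the argument logically valid only if the missing inequality were established, and even then would not explain the attribution. To fix the proof, replace your claimed pointwise bound with Schmidt's system-level inequalities as above.
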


Note that the function $\phi(d)$ is exactly the function occurring in Proposition $III_C$ in Schmidt's work \cite{Schmidt85}.\par
Our new form of Weyl's inequality impoves on this theorem in the following way.

\begin{theorem}\label{thm2}
Let $\phi(d)$ be the function as in Theorem \ref{thm2a}. If the system $\bff$ of homogeneous forms of degree $d>1$ has a $h$-invariant which is bounded below by
\begin{equation*}
h(\bff)> \phi(d) r(r+1)(d-1)2^{d-1},
\end{equation*}
then $\bff$ is a Hardy-Littlewood system.
In the case $d=2$ one may replace the condition on the $h$-invariant of the system by the assumption that the rank of each form in the rational linear system of the quadratic forms is bounded below by $2r(r+1)$.
\end{theorem}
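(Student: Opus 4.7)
The plan is to reduce Theorem \ref{thm2} to Theorem \ref{thm1} via the classical bound of Schmidt that controls the dimension of the singular locus of a form in terms of its $h$-invariant. Specifically, Proposition $III_C$ of \cite{Schmidt85}, which is the very source of the function $\phi(d)$, yields for any nonzero rational form $f$ of degree $d$ in $n$ variables the inequality
\begin{equation*}
h(f) \leq \phi(d)\bigl(n - \dim \Sing(f)\bigr),
\end{equation*}
or equivalently $n - \dim \Sing(f) \geq h(f)/\phi(d)$.

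Applying this to every form $f_\bfb$ in the rational pencil, and using $h(f_\bfb)\geq h(\bff)$ by the very definition of $h(\bff)$, one obtains for all $\bfb \in \Z^r\setminus\{0\}$ the bound $\dim \Sing(f_\bfb)\leq n - h(\bff)/\phi(d)$. Taking the maximum over $\bfb$ and combining with the hypothesis $h(\bff)>\phi(d)r(r+1)(d-1)2^{d-1}$ then yields
\begin{equation*}
n-u(\bff)\geq h(\bff)/\phi(d) > r(r+1)(d-1)2^{d-1},
\end{equation*}
which is precisely the hypothesis of Theorem \ref{thm1}. An appeal to that theorem delivers the asymptotic formula (\ref{eqnHL}), proving that $\bff$ is a Hardy-Littlewood system. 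For the quadratic case $d=2$ the reduction is even more immediate: the elementary identity $\dim\Sing(Q)=n-\rank(Q)$ for a quadratic form $Q$ converts the rank hypothesis directly into the required bound $n-u(\bff)\geq 2r(r+1)$.

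The improvement over Schmidt's original Theorem \ref{thm2a} is then structurally transparent: Schmidt's argument was forced to convert the $h$-invariant bound into a statement about $\dim V^*$, and the passage from $u(\bff)$ to $\dim V^*$ (via a bound of the type $\dim V^*\leq u(\bff)+r-1$) costs an additional $(d-1)r(r-1)$ term once scaled by $\phi(d)$, whereas Theorem \ref{thm1} allows us to work directly with $u(\bff)$. The main obstacle in carrying out this reduction is therefore not analytic but rather one of careful bookkeeping: verifying that the version of Proposition $III_C$ used above really gives a linear-in-$(n-\dim\Sing f)$ upper bound on $h(f)$ with the same constant $\phi(d)$ as appears in Theorem \ref{thm2a}, and handling the strict versus non-strict inequality compatibly in the quadratic case.
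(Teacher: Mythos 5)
The key inequality you invoke, $h(f)\leq \phi(d)\bigl(n-\dim\Sing(f)\bigr)$, is not what Schmidt's Proposition $III_C$ (or equation (17.2) of \cite{Schmidt85}) actually provides. What that source gives, and what the paper cites as equation (\ref{eqn4d}), is $h(f)\leq \phi(d)\,g(f)$, with $g(f)$ the $g$-invariant defined by point counting on the multilinear variety $\calM_f$. The paper's proof of Theorem \ref{thm1} shows $g(f)\geq n-\dim\Sing(f)$, and this inequality runs in the \emph{wrong direction} for your purposes: from $h(f)\leq\phi(d)g(f)$ and $g(f)\geq n-\dim\Sing(f)$ one cannot deduce $h(f)\leq\phi(d)(n-\dim\Sing(f))$. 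The bound you assert is strictly stronger than the one actually available, and you give no argument for it.

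This matters because of the direction of your reduction. You try to route Theorem \ref{thm2} through Theorem \ref{thm1}, i.e.\ from a lower bound on $h(\bff)$ to a lower bound on $n-u(\bff)$. But what the hypotheses yield via equation (\ref{eqn4d}) is a lower bound on $\gtil=\inf_\bfb g(f_\bfb)$, and $\gtil\geq n-u(\bff)$ says nothing about whether $n-u(\bff)$ is itself large. The correct endpoint is therefore Corollary \ref{cor1}, which takes the $g$-invariant condition $\gtil>r(r+1)(d-1)2^{d-1}$ as input directly. That is exactly what the paper's proof does: from $h(\bff)>\phi(d)r(r+1)(d-1)2^{d-1}$ and $h(f_\bfb)\leq\phi(d)g(f_\bfb)$ deduce $\gtil>r(r+1)(d-1)2^{d-1}$, then invoke Corollary \ref{cor1}. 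Your quadratic case has an analogous issue but is salvageable, since for a quadratic form $Q$ with matrix $A$ one does have $g(Q)\geq\rank(A)=n-\dim\Sing(Q)$, and again Corollary \ref{cor1} is the right target. To repair your argument for general $d$, replace the reduction to Theorem \ref{thm1} by a reduction to Corollary \ref{cor1}, and replace the unsupported claim about $\dim\Sing$ with the cited bound $h(f)\leq\phi(d)g(f)$.
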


The special cases of degree $d=2$ and $d=3$ in Theorem \ref{thm2} reduce to Theorem 1 and Theorem 2 in Dietmann's paper \cite{DieA12}. In the quadratic case Dietmann improves on previous results of Schmidt in \cite{Schmidt80} in reducing the lower bound in the rank condition from $2r^2+3r$ to only $2r^2+2r$, and in the cubic case he reduces the lower bound on the $h$-invariant from $10r^2+6r$ (see Schmidt's paper \cite{Schmidt82}) to $8r^2+8r$. In fact, our new form of Weyl's inequality takes up the main idea in Dietmann's work \cite{DieA12}.\par
As Dietmann points out in \cite{DieA12}, the $h$-invariant can in some ways be seen as a generalisation of the rank of a quadratic form to higher degree forms. By diagonalising a quadratic form one sees that its $h$-invariant is bounded by its rank. However, we note that these two notions do not coincide for the case of quadratic forms, as examples built up from forms like $x_1^2-x_2^2= (x_1+x_2)(x_1-x_2)$ show. Hence we need to formulate the case $d=2$ in Theorem \ref{thm2} separately in order to obtain the full strength of the theorem in this case.\par
As another example we consider the case of systems of forms $\bff$ of degree $d=4$. In this case one has $\phi(4)=3$ and Theorem \ref{thm2} implies that the expected asymptotic formula for $N(P)$ holds as soon as 
\begin{equation*}
h(\bff)> 3r(r+1)\cdot3\cdot2^{3}= 9\cdot  (8r^2+8r).
\end{equation*}
Schmidt obtains the same result in his paper \cite{Schmidt85} (see Theorem \ref{thm2a} above) under the stronger condition
\begin{equation}
h(\bff) > \phi(4) (r(r+1)(d-1)2^{d-1}+(d-1)r(r-1))= 9(9r^2+7r).
\end{equation}
We finally remark that if the system of forms $f_i(\bfx)$, $1\leq i\leq r$, in Theorem \ref{thm1} or Theorem \ref{thm2} forms a complete intersection, and if there exist non-singular real and $p$-adic points on the variety $X$ given by these forms, then the singular series $\grS$ and the singular integral $\calJ$ are both positive. In particular, this implies the existence of rational points on the variety $X$ as soon as there are non-singular solutions at every place of $\Q$ including infinity.\par

The structure of this paper is as follows. We recall a version of Weyl's inequality from \cite{Bir62} in the next section and present in Lemma \ref{birlem2} our new variant of Weyl's inequality for systems of forms. We use this in the last section to deduce Theorem \ref{thm1} and Theorem \ref{thm2}, and we explain the improvements of Theorem \ref{thm2} compared to Theorem \ref{thm2a}.\par
\textbf{Acknowledgements.} The author would like to thank Prof. T. D. Browning for comments on an earlier version of this paper and Prof. P. Salberger for helpful discussions.

\section{A variant of Weyl's inequality}

For some $n$-dimensional box $\calB$, some real vector $\bfalp=(\alp_1,\ldots, \alp_r)$ and some large real number $P$ we define the exponential sum
\begin{equation*}
S(\bfalp)= \sum_{\bfx\in P\calB\cap \Z^n}e\left(\sum_{i=1}^r \alp_if_i(\bfx)\right).
\end{equation*}
If $f(\bfx)$ is some homogeneous form of degree $d$, then we let $\Gam_f (\bfx^{(1)},\ldots, \bfx^{(d)})$ be its unique associated symmetric multilinear form satisfying $\Gam_f(\bfx,\ldots,\bfx)=d! f(\bfx)$. Moreover, if $f_i(\bfx)$, $1\leq i\leq r$, form a system of homogeneous forms of degree $d$ as before, then we let $\Gam_i(\bfx^{(1)},\ldots, \bfx^{(d)})$, $1\leq i\leq r$, be the associated multilinear forms. We introduce the sup-norm $|\bfx|=\max_{1\leq i\leq n} |x_i|$ on the vector space $\R^n$, and write $\Vert \gam\Vert= \min_{y\in\Z}|\gam-y|$ for the least distance of a real number $\gam$ to an integer. 
Furthermore, we write here and in the following $\bfe_j$ for the $j$-th unit vector in $n$-dimensional affine space. Then we let $N(P^\xi;P^{-\eta};\bfalp)$ be the number of integer vectors $\bfx^{(2)},\ldots, \bfx^{(d)}$ with $|\bfx^{(2)}|,\ldots, |\bfx^{(d)}|\leq P^\xi$ and
\begin{equation*}
\left\Vert \sum_{i=1}^r \alp_i\Gam_i (\bfe_j,\bfx^{(2)},\ldots, \bfx^{(d)})\right\Vert <P^{-\eta},\quad 1\leq j\leq n.
\end{equation*}

We start our considerations with recalling Lemma 2.4 from Birch's work \cite{Bir62}.
\begin{lemma}[Lemma 2.4 in \cite{Bir62}]\label{bir1}
For fixed $0<\tet\leq 1$ one of the following alternatives hold.\\
i) $|S(\bfalp)|< P^{n-k}$, or\\
ii) $N(P^\tet; P^{-d+(d-1)\tet};\bfalp)\gg P^{(d-1)n\tet - 2^{d-1}k-\eps}$, for any $\eps>0$.
\end{lemma}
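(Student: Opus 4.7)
The plan is to run the standard Weyl-differencing argument for systems of forms, followed by a Davenport-style shrinking step to introduce the parameter $\tet$. Assume that alternative (i) fails, so $|S(\bfalp)|\geq P^{n-k}$, and aim to deduce (ii).

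The first step is to iterate Weyl differencing $d-1$ times. A single differencing step squares $S(\bfalp)$ and, via the substitution $\bfx\mapsto\bfx+\bfh_1$, replaces the phase $\sum_i\alp_i f_i(\bfx)$ by the forward difference $\Delta_{\bfh_1}(\sum_i\alp_i f_i)$, a polynomial of degree $d-1$ in $\bfx$. After $d-1$ iterations one arrives at
\begin{equation*}
|S(\bfalp)|^{2^{d-1}} \ll P^{(2^{d-1}-1)n} \sum_{|\bfh_1|,\ldots,|\bfh_{d-1}|\leq P} |T(\bfh_1,\ldots,\bfh_{d-1};\bfalp)|,
\end{equation*}
where $T(\bfh_1,\ldots,\bfh_{d-1};\bfalp)$ is a linear exponential sum in $\bfx$ over a box of side $O(P)$ whose phase is proportional to $\sum_i\alp_i\Gam_i(\bfh_1,\ldots,\bfh_{d-1},\bfx)$. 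Summing geometric progressions coordinate-wise bounds $|T|$ by $\prod_{j=1}^n \min(P,\|\sum_i\alp_i\Gam_i(\bfh_1,\ldots,\bfh_{d-1},\bfe_j)\|^{-1})$. Inserting this and combining with the assumed lower bound $|S(\bfalp)|^{2^{d-1}}\geq P^{2^{d-1}(n-k)}$ gives
\begin{equation*}
P^{n-2^{d-1}k} \ll \sum_{|\bfh_i|\leq P}\prod_{j=1}^n \min\Bigl(P,\,\Bigl\Vert \sum_i\alp_i\Gam_i(\bfh_1,\ldots,\bfh_{d-1},\bfe_j)\Bigr\Vert^{-1}\Bigr).
\end{equation*}
A standard dyadic decomposition of each $\min$ factor then converts this into a lower bound of the shape $N(P;P^{-1};\bfalp)\gg P^{(d-1)n-2^{d-1}k-\eps}$, which is exactly (ii) in the special case $\tet=1$.

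Finally, to pass from $\tet=1$ to arbitrary $\tet\in(0,1]$ I would apply Davenport's shrinking lemma, used for the same purpose in \cite{Bir62}. For each fixed $j$, the expression $(\bfh_1,\ldots,\bfh_{d-1})\mapsto\sum_i\alp_i\Gam_i(\bfh_1,\ldots,\bfh_{d-1},\bfe_j)$ is $(d-1)$-linear in the $\bfh_i$, so restricting to the smaller box $|\bfh_i|\leq P^\tet$ rescales each coordinate by $P^{\tet-1}$ and hence shrinks this multilinear expression by $P^{(d-1)(\tet-1)}$. Consequently the threshold $P^{-1}$ transforms into $P^{-1+(d-1)(\tet-1)}=P^{-d+(d-1)\tet}$, and the count of surviving tuples is at least $N(P;P^{-1};\bfalp)\cdot P^{(d-1)n(\tet-1)}\gg P^{(d-1)n\tet-2^{d-1}k-\eps}$, exactly matching (ii). The main technical obstacle is verifying that the shrinking lemma applies uniformly across all $n$ multilinear forms at once and that the exponents in the dyadic and shrinking steps combine cleanly; otherwise this is a routine Weyl-differencing computation, carried out with the system $\sum_i\alp_i\Gam_i$ treated as a single symmetric multilinear object throughout.
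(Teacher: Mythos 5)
The paper does not prove this lemma; it is cited verbatim from Birch's work \cite{Bir62}, so there is no internal proof to compare against, only Birch's original argument (his Lemmas 2.1--2.4). Your sketch reproduces precisely that route: iterated Weyl differencing, reduction to a product of $\min$-factors, a dyadic geometry-of-numbers step passing to the counting function $N(P;P^{-1};\bfalp)$, and Davenport's shrinking lemma to introduce the parameter $\tet$. This is the standard and correct approach, and the final exponents for both the $\tet=1$ case and the general case match the statement.

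One small arithmetic slip in the intermediate step: after $d-1$ applications of Weyl differencing and Cauchy--Schwarz, the prefactor is $P^{(2^{d-1}-d)n}$, not $P^{(2^{d-1}-1)n}$ (this is Birch's Lemma 2.1, where the box factor appears to the power $2^{d-1}-d$). Consequently the displayed lower bound on the sum of $\min$-products should read
\begin{equation*}
P^{dn-2^{d-1}k}\ll \sum_{|\bfh_i|\leq P}\prod_{j=1}^n\min\Bigl(P,\Bigl\Vert\sum_i\alp_i\Gam_i(\bfh_1,\ldots,\bfh_{d-1},\bfe_j)\Bigr\Vert^{-1}\Bigr),
\end{equation*}
rather than with $P^{n-2^{d-1}k}$ on the left. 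The subsequent dyadic reduction, which replaces the sum of $\min$-products by $P^{n+\eps}N(P;P^{-1};\bfalp)$, then gives exactly the $\tet=1$ bound $N(P;P^{-1};\bfalp)\gg P^{(d-1)n-2^{d-1}k-\eps}$ that you state, so your conclusion is unaffected; only the intermediate exponent needs correcting. The shrinking step (Birch's Lemma 2.3) is applied correctly: multilinearity over the $(d-1)n$-dimensional tuple space gives the rescaling $P^{(d-1)n(\tet-1)}$ and the new threshold $P^{-d+(d-1)\tet}$, which match the statement.
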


The main idea is to treat the condition ii) differently than in Birch's work \cite{Bir62}, following a similar idea as taken up in the paper \cite{DieA12}. Before stating our new version of Weyl's inequality, we need to introduce the $g$-invariant of a homogeneous form.\par
We define $\calM_f$ to be the variety in affine $(d-1)n$-space given by
\begin{equation*}
\Gam_f(\bfe_j,\bfx^{(2)},\ldots, \bfx^{(d)})=0,\quad 1\leq j\leq n,
\end{equation*}
and write $\calM_f(P)$ for the number of integer points on $\calM_f$ with coordinates all bounded by $P$. Then we define the $g$-invariant $g(f)$ of the form $f$ to be the largest real number such that 
\begin{equation*}
\calM_f (P)\ll P^{(d-1)n-g(f)+\eps},
\end{equation*}
holds for all $\eps >0$. Note that this number $g(f)$ coincides with the $g$-invariant that Schmidt associates to a single form $f$ in \cite{Schmidt85}.\par

\begin{lemma}\label{birlem2}
Let $\gtil=\inf_{\bfb\in \Z^r\setminus \{0\}}g (f_\bfb)$ and let $0<\tet\leq 1$ be fixed. Then we either have the bound\\
i) $|S(\bfalp)|<P^{n-2^{-d+1}\gtil\tet+\eps}$, or\\
ii) (major arc approximation for $\bfalp$ with respect to the parameter $\tet$) there exist natural numbers $a_1,\ldots, a_r$ and $1\leq q\ll P^{r(d-1)\tet}$ with $\gcd (q,a_1,\ldots, a_r)=1$ and 
\begin{equation*}
|q\alp_i-a_i|\ll P^{-d+r(d-1)\tet},\quad 1\leq i\leq r.
\end{equation*}
\end{lemma}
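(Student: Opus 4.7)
The plan is to feed Birch's Lemma \ref{bir1} into an argument that either delivers our alternative (i) directly or converts the large-solution alternative of Lemma \ref{bir1} into the major-arc approximation of (ii) via a span dichotomy governed by the $g$-invariant.

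First I would apply Lemma \ref{bir1} with $k=2^{-d+1}\gtil\tet-\eps$, chosen so that its alternative (i) is precisely our alternative (i). If instead alternative (ii) of Lemma \ref{bir1} holds, a short computation using $2^{d-1}\cdot 2^{-d+1}\gtil\tet=\gtil\tet$ yields
\begin{equation*}
N(P^\tet;P^{-d+(d-1)\tet};\bfalp)\gg P^{(d-1)n\tet-\gtil\tet+(2^{d-1}-1)\eps}.
\end{equation*}
To each counted tuple I attach the $n$ integer vectors
\begin{equation*}
\bfm_j(\bfx):=\bigl(\Gam_1(\bfe_j,\bfx^{(2)},\ldots,\bfx^{(d)}),\ldots,\Gam_r(\bfe_j,\bfx^{(2)},\ldots,\bfx^{(d)})\bigr)\in\Z^r,\quad 1\leq j\leq n,
\end{equation*}
each of sup-norm $\ll P^{(d-1)\tet}$ and satisfying $\|\bfalp\cdot\bfm_j(\bfx)\|<P^{-d+(d-1)\tet}$. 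All of these vectors lie in the set $T:=\{\bfm\in\Z^r:|\bfm|\ll P^{(d-1)\tet},\ \|\bfalp\cdot\bfm\|<P^{-d+(d-1)\tet}\}$, and the argument splits according to whether $T$ spans $\Q^r$ or not.

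In the spanning case I pick $r$ linearly independent elements of $T$ as the rows of an integer matrix $M$. Then $M\bfalp$ is within $P^{-d+(d-1)\tet}$ of some $\bfa\in\Z^r$, and multiplying the identity $M\bfalp=\bfa+O(P^{-d+(d-1)\tet})$ through by $\adj M$ produces integers $q_0:=\det M$ and $a'_i:=(\adj(M)\bfa)_i$ with
\begin{equation*}
1\leq|q_0|\ll P^{r(d-1)\tet},\qquad |q_0\alp_i-a'_i|\ll P^{(r-1)(d-1)\tet}\cdot P^{-d+(d-1)\tet}=P^{-d+r(d-1)\tet},
\end{equation*}
by Hadamard's inequality applied to $M$ and to each $(r-1)\times(r-1)$ minor. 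Dividing through by $\gcd(q_0,a'_1,\ldots,a'_r)$ yields alternative (ii) in the stated form. In the non-spanning case there exists $\bfb\in\Z^r\setminus\{0\}$ annihilating every element of $T$; then every counted tuple satisfies $\Gam_{f_\bfb}(\bfe_j,\bfx^{(2)},\ldots,\bfx^{(d)})=\sum_i b_i\Gam_i(\bfe_j,\bfx^{(2)},\ldots,\bfx^{(d)})=0$ for all $j$, placing it on $\calM_{f_\bfb}$. Hence the count is bounded above by $\calM_{f_\bfb}(P^\tet)\ll P^{(d-1)n\tet-g(f_\bfb)\tet+\eta}\leq P^{(d-1)n\tet-\gtil\tet+\eta}$ for any $\eta>0$, which contradicts the lower bound above provided $\eta<(2^{d-1}-1)\eps$ and $P$ is large.

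The hard part, as I see it, is not the case split itself but the careful bookkeeping of small losses: the $\eps$ that appears in Lemma \ref{bir1}(ii), the $\eta$ implicit in the definition of $g(f_\bfb)$, and the $+\eps$ in our own alternative (i). The gap of size $(2^{d-1}-1)\eps$ in the lower bound on $N$ is what powers the contradiction in the non-spanning branch, and it is only available because we chose $k$ strictly below $2^{-d+1}\gtil\tet$ at the outset.
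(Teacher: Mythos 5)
Your proof is correct and follows essentially the same route as the paper's: apply Birch's Lemma 2.4 with $k$ chosen just below $2^{-d+1}\gtil\tet$, split according to whether the integer vectors $(\Gam_1(\bfe_j,\bfx^{(2)},\ldots,\bfx^{(d)}),\ldots,\Gam_r(\bfe_j,\bfx^{(2)},\ldots,\bfx^{(d)}))$ span $\Q^r$ (equivalently, whether the paper's matrix $\psi$ has rank $r$), use the adjoint/Cramer argument in the spanning case, and rule out the non-spanning case by comparing the lower bound on the count with the upper bound coming from the $g$-invariant. Your explicit choice $k=2^{-d+1}\gtil\tet-\eps$ and the $(2^{d-1}-1)\eps$ slack making the contradiction work are just a more careful writing-out of what the paper's Case (b) accomplishes.
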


\begin{proof}
Let the notation be as in Lemma \ref{bir1} and assume that alternative (ii) in Lemma \ref{bir1} holds.\par
We consider the matrix $\psi$ of size $r\times (nN(P^\tet; P^{-d+(d-1)\tet};\bfalp))$ with entries $\Gam_i(\bfe_j,\bfx^{(2)},\ldots, \bfx^{(d)})$ in the $i$th row. The columns are indexed by $(j,\bfx^{(2)},\ldots, \bfx^{(d)})$, where $1\leq j\leq n$ and $\bfx^{(2)},\ldots, \bfx^{(d)}$ run through all tuples of integer vectors counted by $N(P^\tet; P^{-d+(d-1)\tet};\bfalp)$. We distinguish two cases.\par
Case (a): Assume that $\rank (\psi) =r$. Then there is a $r\times r$-minor $\psitil$ of full rank, which we say is given by
\begin{equation*}
\psitil = (\psitil_{i,l})_{1\leq i,l\leq r}=(\Gam_i(\bfe_{j_l},\bfx^{(2)}_l,\ldots, \bfx_l^{(d)}))_{1\leq i,l\leq r}.
\end{equation*}
In particular, we have $\Vert \sum_{i=1}^r \alp_i \psitil_{i,l}\Vert < P^{-d+(d-1)\tet}$, for all $1\leq l\leq r$. Hence, we can write
\begin{equation*}
\sum_{i=1}^r \alp_i\psitil_{i,l}= \atil_l+\deltil_l,
\end{equation*}
with integers $\atil_l$ and real numbers $\deltil_l$ with $|\deltil_l|<P^{-d+(d-1)\tet}$. Let $\psitil^\adj$ be the adjoint matrix to $\psitil$, which satisfies $\psitil^\adj \psitil = (\det \psitil) \id$, and let $q=\det \psitil$. Since $\psitil$ was assumed to be of rank $r$, its determinant $q$ is non-zero. Furthermore we note that $|q|\ll P^{r\tet (d-1)}$. Now we can use the adjoint matrix $\psitil^\adj$ to find a good approximation for $\bfalp$ by rational numbers with small denomiator. Indeed, we have
\begin{align*}
\left|q\alp_i-\sum_{l=1}^r\psitil^\adj_{i,l}\atil_l\right| &\leq \sum_{l=1}^r |\psitil_{i,l}^\adj||\deltil_l| \\ &\ll P^{\tet (r-1)(d-1)}P^{-d+(d-1)\tet}.
\end{align*}
We set $a_i=\sum_{l=1}^r\psitil_{i,l}^\adj \atil_l$. After removing common factors of $q$ and the integers $a_i$, we obtain integers $q,a_1,\ldots, a_r$ with $\gcd (q,a_1,\ldots, a_r)=1$ and $1\leq q\ll P^{r(d-1)\tet}$, such that 
\begin{equation*}
|q\alp_i-a_i|\ll P^{-d+r(d-1)\tet},\quad 1\leq i\leq r.
\end{equation*}
In this case the conclusion (ii) of the Lemma holds.\par
Case (b): Assume that $\rank (\psi) <r$. Then the $r$ rows of $\psi$ are linearly dependent over $\Q$, and thus there exist integers $b_1,\ldots, b_r\in \Z$, not all zero, such that 
\begin{equation*}
\sum_{i=1}^r b_i \Gam_i(\bfe_j,\bfx^{(2)},\ldots, \bfx^{(d)})=0,
\end{equation*}
for all $1\leq j\leq n$ and for all tuples $\bfx^{(2)},\ldots, \bfx^{(d)}$ counted by $N(P^\tet; P^{-d+(d-1)\tet};\bfalp)$.\par
We note that 
\begin{equation*}
\sum_{i=1}^r b_i\Gam_i(\bfx^{(1)},\ldots, \bfx^{(d)})= \Gam_{f_\bfb}(\bfx^{(1)},\ldots, \bfx^{(d)})
\end{equation*}
is the multilinearform associated to the form $f_\bfb(\bfx)= \sum_{i=1}^r b_if_i(\bfx)$. We recall the definition of the variety $\calM_f$ stated before this lemma, and deduce from the lower bound on $N(P^\tet; P^{-d+(d-1)\tet};\bfalp)$, that we have
\begin{equation}\label{bireqn2}
\calM_{f_\bfb}(P^\tet)\gg P^{(d-1)n\tet - 2^{d-1}k-\eps},
\end{equation}
for any $\eps >0$.
By definition of the $g$-invariant $g(f_\bfb)$ we see that equation (\ref{bireqn2}) implies that
\begin{equation*}
(d-1)n-2^{d-1}k/\tet-\eps\leq (d-1)n-g(f_\bfb)+\eps,
\end{equation*}
for any $\eps >0$. Hence we have $2^{-d+1}g(f_\bfb)\tet \leq k$ for some $\bfb\in\Z^r\setminus\{0\}$, and the first alternative of the lemma holds.
\end{proof}

\section{Applications}

The main goal of this section is to prove Theorem \ref{thm1} and Theorem \ref{thm2}. Before we show how Lemma \ref{birlem2} implies Theorem \ref{thm1} and Theorem \ref{thm2} we recall some very general results from Schmidt's work \cite{Schmidt85}, which simplify our following arguments.\par
For this we first recall the Hypothesis on $\bff$ introduced in section 4 in \cite{Schmidt85} for the case of forms of same degree.

\begin{hypothesis}[Hypothesis on $\bff$ with parameter $\Ome$]\label{hyp}
Let $\calB$ be some box and $\Del >0$ and assume that $P$ is sufficiently large depending on the system $\bff$, the parameter $\Ome$, the box $\calB$ and $\Del$. Then one either has the upper bound\\
i) $|S(\bfalp)|\leq P^{n-\Del\Ome}$, or\\
ii) there are natural numbers $q\leq P^\Del$ and $a_1,\ldots, a_r$ such that
\begin{equation*}
|q\alp_i-a_i|\leq P^{-d+\Del},\quad 1\leq i\leq r.
\end{equation*}
\end{hypothesis}

In his work \cite{Schmidt85} Schmidt shows that this hypothesis is enough to verify that the Hardy-Littlewood circle method can be applied to the counting function $N(P)$ related to the system of equations $\bff$. One of his main results is the following, which we only state for the special case of forms of same degree, since this is all we use in this paper.

\begin{theorem}[Proposition I in \cite{Schmidt85}, second part]\label{propI}
Suppose the system $\bff$ satisfies Hypothesis \ref{hyp} with respect to some parameter
\begin{equation*}
\Ome > r+1.
\end{equation*}
Then $\bff$ is a Hardy-Littlewood system.
\end{theorem}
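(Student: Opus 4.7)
The plan is to set up the Hardy--Littlewood circle method and exploit Hypothesis \ref{hyp} through a pruning (dyadic dissection) argument. I begin by writing
\[
N(P) = \int_{[0,1]^r} S(\bfalp)\,d\bfalp,
\]
and fix a small parameter $\Del_0 > 0$ to be chosen later. Define the preliminary major arcs
\[
\grM(\Del) = \bigcup_{q \le P^{\Del}}\bigcup_{\substack{\bfa \in \Z^r \\ \gcd(q,\bfa)=1}} \bigl\{\bfalp : |q\alp_i - a_i| \le P^{-d+\Del},\ 1 \le i \le r\bigr\},
\]
whose Lebesgue measure is easily seen to be $\ll P^{\Del(r+1)-rd}$. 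By Hypothesis \ref{hyp}, every $\bfalp \notin \grM(\Del)$ satisfies $|S(\bfalp)| \le P^{n-\Del\Ome}$, for any admissible $\Del$.

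To bound the minor arc integral I would perform a dyadic dissection: set $\Del_j = (1+\eps)^j \Del_0$ for small $\eps > 0$ and $j = 0,1,2,\ldots$, ranging over $O(\log P)$ values. The set where $|S(\bfalp)| > P^{n-\Del_j\Ome}$ lies inside $\grM(\Del_j)$, so its measure is $\ll P^{\Del_j(r+1)-rd}$. Summing the contributions of the dyadic shells gives
\[
\int_{[0,1]^r \setminus \grM(\Del_0)} |S(\bfalp)|\,d\bfalp \ll \sum_j P^{n-\Del_j\Ome} \cdot P^{\Del_j(r+1)-rd} = P^{n-rd}\sum_j P^{-\Del_j(\Ome - r - 1)}.
\]
Since $\Ome > r+1$, the exponent $-\Del_j(\Ome-r-1)$ is negative and most favourable at $j = 0$, so the whole sum is $\ll P^{n-rd - \Del_0(\Ome-r-1)+\eps}$, giving the required power saving on the minor arcs.

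On the major arcs $\grM(\Del_0)$ the analysis follows the standard pattern: on each cell around $\bfa/q$ with $q \le P^{\Del_0}$, approximate $S(\bfalp)$ by a product of a complete exponential sum modulo $q$ (the local factor) and an oscillatory integral over the box $\calB$. Summing over admissible $(q,\bfa)$ and completing both the singular series and singular integral produces the main term $\grS \calJ P^{n-rd}$. The completion errors are controlled by the same pruning argument applied in the $q$-aspect, using Hypothesis \ref{hyp} at each prime power to obtain the convergence of $\grS$; here the condition $\Ome > r+1$ reappears and guarantees absolute convergence of the local densities.

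The principal obstacle is the dyadic pruning step: one must carefully track uniformity of the implicit constants in Hypothesis \ref{hyp} across the dyadic range of $\Del$, and balance the choice of $\Del_0$ so that the minor arc saving $\Del_0(\Ome-r-1)$ and the completion losses on the major arcs are simultaneously power-saving. A secondary but genuinely subtle point is obtaining absolute convergence of $\grS$; this too leans on $\Ome > r+1$, since otherwise the local factors $q^{-r}\sum_{\gcd(q,\bfa)=1} |\Sig_{\bfa,q}|$ would not sum absolutely over $q$.
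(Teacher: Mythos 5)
This statement is not proved in the paper at all: it is a verbatim citation of the second part of Proposition~I from Schmidt's 1985 paper \cite{Schmidt85}, restricted to the special case of forms of equal degree, and the author explicitly presents it as a known result to be invoked, not established. So there is no ``paper's own proof'' to compare against.

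That said, your sketch captures the broad shape of Schmidt's underlying argument (major/minor arc dissection, dyadic pruning of the minor arcs driven by the Weyl-type hypothesis, completion of the singular series and singular integral on the major arcs), so it is a reasonable outline of what one would have to reconstruct from \cite{Schmidt85} if the result were to be reproved here. Two caveats. First, your dyadic pruning has a bookkeeping slip: on the shell $\grM(\Del_{j+1})\setminus\grM(\Del_j)$ the bound $|S(\bfalp)|\le P^{n-\Del_j\Ome}$ comes from $\bfalp\notin\grM(\Del_j)$, while the measure bound is $\ll P^{\Del_{j+1}(r+1)-rd}$, not $P^{\Del_j(r+1)-rd}$; with $\Del_{j+1}=(1+\eps)\Del_j$ the shell contribution is $\ll P^{n-rd+\Del_j\bigl((1+\eps)(r+1)-\Ome\bigr)}$, so the saving is $\Del_j\bigl(\Ome-(1+\eps)(r+1)\bigr)$ and one must choose $\eps$ small enough in terms of $\Ome-(r+1)>0$ before summing. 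Your displayed equality, which pairs $\Del_j$ on both factors, is the idealized limit and skips this. Second, Schmidt's actual Proposition~I is stated and proved in the more general setting of systems of forms of mixed degrees, and the passage from Hypothesis~\ref{hyp} to the full asymptotic (including the truncation errors in both $\grS$ and $\calJ$, and their quantitative matching with the minor arc saving) occupies a substantial part of his paper; your paragraph compresses all of this into a remark. As an outline of the route taken in \cite{Schmidt85} it is serviceable, but it should not be mistaken for a self-contained proof, nor does it reflect any argument that appears in the present paper.
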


In combination with our new version of Weyl's inequality in Lemma \ref{birlem2} we obtain the following useful Corollary.

\begin{corollary}\label{cor1}
Assume that $\bff$ is a system of homogeneous forms of same degree with
\begin{equation*}
\inf_{\bfb\in \Z^r\setminus \{0\}}g (f_\bfb) > r(r+1)(d-1)2^{d-1}.
\end{equation*}
Then the asymptotic formula for $N(P)$ as predicted by the circle method holds, i.e. $\bff$ is a Hardy-Littlewood system.
\end{corollary}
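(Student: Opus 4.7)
The plan is to combine Lemma \ref{birlem2} directly with Theorem \ref{propI}. The strategy is to show that the hypothesis $\gtil := \inf_{\bfb \in \Z^r \setmin \{0\}} g(f_\bfb) > r(r+1)(d-1)2^{d-1}$ forces the system $\bff$ to satisfy Hypothesis \ref{hyp} with some parameter $\Ome > r+1$; then Theorem \ref{propI} immediately yields the conclusion.

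The key computation is a bookkeeping match of parameters between the conclusion of Lemma \ref{birlem2} and the statement of Hypothesis \ref{hyp}. First I would set
\begin{equation*}
\Ome = \frac{\gtil}{r(d-1)2^{d-1}},
\end{equation*}
which by the assumption satisfies $\Ome > r+1$. For any fixed $\Del > 0$ sufficiently small I would then apply Lemma \ref{birlem2} with $\tet = \Del/(r(d-1)) \le 1$. In the minor-arc case (i) of Lemma \ref{birlem2} this yields $|S(\bfalp)| < P^{n - 2^{-d+1}\gtil \tet + \eps} = P^{n - \Del\Ome + \eps}$, and in the major-arc case (ii) one obtains $1 \le q \ll P^{r(d-1)\tet} = P^\Del$ and $|q\alp_i - a_i| \ll P^{-d + \Del}$ for each $1 \le i \le r$.

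The only wrinkle is that Lemma \ref{birlem2} delivers $\ll$-inequalities and an extra $P^\eps$, while Hypothesis \ref{hyp} demands clean $\le$-inequalities. I would handle this routinely: replace $\Ome$ by any $\Ome' \in (r+1, \Ome)$, choose $\eps > 0$ small enough that $\Ome' < \Ome - \eps/\Del$, and then take $P$ large enough (depending on $\bff$, $\calB$, $\Del$, $\Ome'$) so that the implicit absolute constants coming from Lemma \ref{birlem2} are absorbed into arbitrarily small powers of $P$. This verifies Hypothesis \ref{hyp} with the parameter $\Ome' > r+1$.

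With Hypothesis \ref{hyp} established, Theorem \ref{propI} applies directly and gives the desired Hardy-Littlewood asymptotic for $N(P)$. The only place where any real content is used is the parameter identification $2^{-d+1}\gtil/(r(d-1)) > r+1$, which is precisely the numerical hypothesis on $\gtil$; everything else is parameter matching. I do not expect any genuine obstacle — the work is entirely in Lemma \ref{birlem2} and in Schmidt's Proposition~I — and the main point of this corollary is simply to package the two together.
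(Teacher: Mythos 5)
Your proof takes the same route as the paper: set $\Ome$ equal to the natural value $\gtil/(r(d-1)2^{d-1})$, observe that the hypothesis on $\gtil$ gives $\Ome > r+1$, translate Lemma \ref{birlem2} into Hypothesis \ref{hyp} via the substitution $\Del = r(d-1)\tet$, and then invoke Theorem \ref{propI}. The parameter bookkeeping is correct, and you are right to notice and handle the slack coming from the implicit constants and the $P^\eps$ in Lemma \ref{birlem2} by passing to a slightly smaller $\Ome'\in(r+1,\Ome)$.

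There is, however, one real omission. Hypothesis \ref{hyp} has to be verified for \emph{every} $\Del>0$, but your substitution $\tet = \Del/(r(d-1))$ only makes sense when $\tet\le 1$, i.e.\ for $\Del\le r(d-1)$; you explicitly restrict to ``$\Del>0$ sufficiently small'' and never address the complementary range. The paper closes this in one sentence: for $\Del > r(d-1)$, alternative (ii) of Hypothesis \ref{hyp} holds automatically by Dirichlet's simultaneous approximation theorem. Indeed, Dirichlet gives $1\le q\le P^{\Del}$ and $|q\alp_i-a_i|\le P^{-\Del/r}$ for all $i$, and since $\Del> r(d-1)\ge rd/(r+1)$ one has $P^{-\Del/r}\le P^{-d+\Del}$, which is exactly alternative (ii). Without this observation the argument as written does not actually establish Hypothesis \ref{hyp} in the required generality; with it, the proof is complete and matches the paper's.
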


\begin{proof}
First we note that Lemma \ref{birlem2} implies Hypothesis \ref{hyp} with respect to any parameter 
\begin{equation*}
\Ome < 2^{-d+1} r^{-1}(d-1)^{-1}\gtil.
\end{equation*}
This is clear by the formulation of Lemma \ref{birlem2} for the range $0<\Del\leq r(d-1)$ if we set $\Del = r(d-1)\tet$. In the case where $\Del >r(d-1)$ alternative (ii) in Hypothesis \ref{hyp} is automatically satisfied by Dirichlet's approximation principle.\par
Now Proposition I in \cite{Schmidt85} applies as stated in Theorem \ref{propI}, which completes the proof of the Corollary.
\end{proof}

Next we relate the $g$-invariant of a homogeneous form to the dimension of its singular locus, and thereby establish the new formulation of Birch's theorem on forms in many variables as stated in Theorem \ref{thm1}. 

\begin{proof}[Proof of Theorem \ref{thm1}]
Consider some vector $\bfb\in\Z^r\setminus\{0\}$ and its associated form $f_\bfb$ in the pencil of $f_i(\bfx)$, $1\leq i\leq r$. We note that the intersection of the affine variety $\calM_{f_\bfb}$ with the diagonal $\calD$ given by
\begin{equation*}
\bfx^{(2)}=\ldots = \bfx^{(d)},
\end{equation*}
is isomorphic to the singular locus of the form $f_\bfb$. Hence we obtain by the affine intersection theorem that
\begin{equation*}
\dim \Sing(f_\bfb) = \dim (\calD\cap \calM_{f_\bfb}) \geq \dim \calD + \dim \calM_{f_\bfb}- (d-1)n.
\end{equation*}
This shows that
\begin{equation*}
\dim \Sing (f_\bfb)\geq \dim \calM_{f_\bfb}-(d-1)n+n,
\end{equation*}
which implies that
\begin{equation*}
g(f_\bfb)\geq n-\dim \Sing (f_\bfb).
\end{equation*}
Taking the the infimum over all non-zero integer tuples $\bfb$ we obtain
\begin{equation*}
\inf_{\bfb\in\Z^r\setminus\{0\}}g(f_\bfb)\geq n-\max_{\bfb\in\Z^r\setminus\{0\}}(\dim \Sing(f_\bfb)),
\end{equation*}
and hence Lemma \ref{birlem2} holds with $\gtil$ replaced by $n-\max_{\bfb\in\Z^r\setminus\{0\}}(\dim \Sing(f_\bfb))$. This shows that in Lemma 4.3 in Birch's work \cite{Bir62}, the quantity $K$ which is defined in his setting as 
\begin{equation*}
2^{d-1}K= n-\dim V^*,
\end{equation*}
can be replaced by
\begin{equation*}
2^{d-1}K= n-\max_{\bfb\in\Z^r\setminus\{0\}}(\dim \Sing(f_\bfb)).
\end{equation*}
Now Theorem \ref{thm1} follows identically as the main theorem in Birch's paper \cite{Bir62}. Alternatively we can apply Corollary \ref{cor1} to obtain the desired result.
\end{proof}

Next we turn towards the proof of Theorem \ref{thm2} which improves on the previous known results in Theorem \ref{thm2a}. However, since Theorem \ref{thm2a} is not contained in this formulation in the paper \cite{Schmidt85}, we first give a short deduction of it from the results of \cite{Schmidt85}. 
Indeed, Schmidt concludes in his remark after Proposition $II_0$ that the expected asymptotic formula in Theorem \ref{thm2a} for $N(P)$ holds as soon as a so called $g$-invariant $g(\bff)$ of the system $\bff$ is larger than
\begin{equation}\label{eqn4a}
g(\bff)>2^{d-1}(d-1)r(r+1).
\end{equation}
His Corollary after Proposition III states that there is the relation
\begin{equation}\label{eqn4b}
h(\bff)\leq \phi(d) (g(\bff)+(d-1)r(r-1)).
\end{equation}
Hence the condition 
\begin{equation*}
h(\bff)> \phi(d) (r(r+1)(d-1)2^{d-1}+(d-1)r(r-1)),
\end{equation*}
in Theorem \ref{thm2a} implies that (\ref{eqn4a}) holds and thus the conclusion of Theorem \ref{thm2a} follows.\par
The main difference in the use of our new version of Weyl's inequality in comparison to Schmidt's work is that we can state everything in terms of the $g$-invariant of a single form. In his work \cite{Schmidt85} Schmidt uses a form of Weyl's inequality where the infimum of all $g$-invariants of the elements of the rational linear system is replaced by his so called $g$-invariant $g(\bff)$ of the whole system. This is in complete analogy with the replacement of the locus $V^*$ in Birch's work \cite{Bir62} by the maximal dimension of the singular loci of elements in the rational linear system as in Theorem \ref{thm1}. 

\begin{proof}[Proof of Theorem \ref{thm2}]
For a single form $f$, equation (17.2) in \cite{Schmidt85} implies that
\begin{equation}\label{eqn4d}
h(f)\leq \phi(d) g(f),
\end{equation}
which should be compared to the relation (\ref{eqn4b}) for systems of forms. For a single form we do not need the term $\phi(d)(d-1)r(r-1)$, which is present for the relation referring to the whole system of forms in equation (\ref{eqn4b}). This explains our improvement of Theorem \ref{thm2} compared to Theorem \ref{thm2a}.\par
Assume now that the assumptions of Theorem \ref{thm2} are satisfied, i.e. 
\begin{equation*}
h(\bff)> \phi(d) r(r+1)(d-1)2^{d-1}.
\end{equation*}
Recall that we have defined $h(\bff)=\min_{\bfb\in\Z^r\setminus\{0\}}h(f_\bfb)$. Hence we obtain together with equation (\ref{eqn4d}) the relation
\begin{equation*}
\inf_{\bfb\in\Z^r\setminus\{0\}} g(f_\bfb)> r(r+1)(d-1)2^{d-1}.
\end{equation*}
Now we apply Corollary \ref{cor1} to complete the proof of Theorem \ref{thm2} for the case of degree $d\geq 3$.\par
For the case of systems of quadratic forms we note that the $g$-invariant of a single quadratic form $f$ is bounded below by its rank. Indeed, let some quadratic form $f$ be given by some $n\times n$-matrix $A$. Then the variety $\calM_f$ is given by the system of linear equations $A\bfx=0$, and we deduce that
\begin{equation*}
\calM_f(P)\ll P^{n-\rank (A)},
\end{equation*}
which shows that $g(f)\geq \rank (A)$. Now we apply Corollary \ref{cor1} as in the case $d\geq 3$.
\end{proof}

\bibliographystyle{amsbracket}

\begin{thebibliography}{18}


\bibitem{Bir62}
B. J. Birch, \emph{Forms in many variables}, Proc. Roy. Soc. Ser. A \textbf{265} (1962), 245--263.

\bibitem{BraA13}
J. Brandes, \emph{Forms representing forms and linear spaces on hypersurfaces}, Proc. London Math. Soc, to appear. (arXiv:12202.5026)

\bibitem{DieA12}
R. Dietmann, \emph{Weyl's inequality and systems of forms}, preprint. (arXiv:1208.1968)




\bibitem{LeeAlan}
S.-L. A. Lee, \emph{Birch's theorem in function fields}, submitted, 2012. (arXiv: 1109.4953)

\bibitem{bihomforms}
D. Schindler, \emph{Bihomogeneous forms in many variables}, J. Th\'eorie Nombres Bordeaux, to appear. (arXiv:1301.6516)

\bibitem{Schmidt80}
W. M. Schmidt, \emph{Simultaneous rational zeros of quadratic forms}, Seminar of Number Theory, Parix 1980-81, Progr. Math \textbf{22} (1982), 281--307.

\bibitem{Schmidt82}
W. M. Schmidt, \emph{On cubic polynomials. IV. Systems of rational equations}, Monatsh. Math. \textbf{93} (1982), 329--348.

\bibitem{Schmidt85}
W. M. Schmidt, \emph{The density of integer points on homogeneous varieties},
Acta Math. \textbf{154} (1985), no. 3-4, 243--296.

\bibitem{Ski97}
C.M. Skinner, \emph{Forms over number fields and weak approximation}, 
Comp. Math. \textbf{106} (1997), 11--29.




\end{thebibliography}
\providecommand{\bysame}{\leavevmode\hbox to3em{\hrulefill}\thinspace}

\end{document}